\newtheorem{theorem}{Theorem}[section]
\theoremstyle{definition}
\theoremstyle{remark}
\numberwithin{equation}{section}
\newcommand\lam{\lambda}
\newcommand\Del{\Delta}
\newcommand\ppt{\frac{\partial}{\partial t}}
\newcommand\ddt{\frac{d}{dt}}
\newcommand\vol{\textup{\textsf{Vol}}}
\begin{document}

\title[A Comparison Theorem]{A Comparison Theorem via the Ricci Flow}

\author[Jun Ling]{Jun \underline{LING}}
\address{Department of Mathematics, Utah Valley University, Mail stop 261, 800 W. University Pkwy, Orem, Utah 84058, USA}
\email{lingju@uvu.edu}
\thanks{The author thank the Mathematical Sciences Research Institute at Berkeley for its
hospitality for program `The Geometric Evolution Equations and
Related Topics' and thank National Science Foundation for the
support offered.}


\subjclass[2000]{Primary 53C20, 35P15, 58J35; Secondary 53C21}

\date{October 8, 2007, Revised: Aril 6, 2009}


\keywords{Comparison theorem, eigenvalue, Ricci flow}

\begin{abstract}
We prove a comparison theorem for the compact surfaces
with negative Euler characteristic
via the Ricci flow.
\end{abstract}

\maketitle

\section{Introduction}\label{sec-intro}

A good way to understand geometry of manifolds is to compare
general manifolds with the ones with constant curvature. There
have been many such comparison results. Among them are Bishop and
Guenther's volume comparison theorems, Cheeger-Yau's heat kernel
comparison theorem, Faber-Krahn's comparison theorem, to name only
a few (see \cite{cha}, \cite{schoen-yau}). Those comparison
theorems not only have great impact on geometry but also are
particularly beautiful. For example, Faber-Krahn's theorem (see
\cite{cha}, \cite{schoen-yau}) states that in Euclidean space
$\mathbb{R}^n$, the first Dirichlet eigenvalue of the Laplacian
of a domain $\Omega$ is greater than or equal to that of a ball
$B$ as long as $\Omega$ and $B$ have the equal volume. In this
paper, we prove a new comparison theorem (Theorem \ref{thm1})
that may be considered as an analogue for closed surfaces.
While Faber-Krahn's is on domains of flat space $\mathbb{R}^n$,
ours is on curved surfaces. One can expect that the curvature enters to play,
as showed in Theorem \ref{thm1}. Theorem \ref{thm1} actually implies
a sharp upper bound for the $i^{\textup{th}}$ eigenvalue. That is stated in
Theorem \ref{thm2}.

We prove our results by using Hamilton's Ricci flow. The proofs
are enlightened by Perelman's work \cite{perelman} on the
monotonic property of the first eigenvalue of the operator
$-4\Delta +R$ under the Ricci flow. Perelman \cite{perelman} used
that and nondecreasing property of his entropy under the Ricci
flow to rule out nontrivial steady or expanding breathers on
compact manifolds, among other applications.

In this paper, we let $M$ be a compact surface without boundary.
For any Riemannian metric $g$ on $M$, we let $K_g$ be the Gauss
curvature, $\kappa_g$ the minimum of the Gauss curvature,
$\vol_g(M)$ the volume of $M$, $d\mu_{g}$ the volume element,
$\Delta_g$ the Lalacian of the metric $g$. We have
the following results.

\begin{theorem}[Comparison Theorem]\label{thm1}
Let $M$ be a compact surface with Euler Characteristic $\chi<0$,
$g$ any Riemannian metric on $M$, and $\tilde{g}$ a Riemannian
metric on $M$ that has constant Gauss curvature $K_{\tilde{g}}$ and that is in the
same conformal class as g. If\, $\vol_g(M)=\vol_{\tilde{g}}(M)$, and if
the $i^{\textup{th}}$
eigenvalue $\lam_g$ of the Lapacian $\Delta_g$ has $\textup{C}^1$-dependence on metric $g$,
then we have
\[
\frac{\lam_g}{\kappa_g}\geq
\frac{\lambda_{\tilde{g}}}{\kappa_{\tilde{g}}},
\]
where the constant Gauss curvature for metric $\tilde{g}$ is
$K_{\tilde{g}}=2\pi\chi/\vol_g(M)$.
\end{theorem}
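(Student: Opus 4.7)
\medskip
\noin\textbf{Proof plan.}  My plan is to evolve $g$ under the normalized Ricci flow and show that the ratio $\lambda/\kappa$ is monotone non-increasing along the flow.  Since the normalized flow preserves both the total volume and the conformal class of $g$, and since (by Hamilton's convergence theorem for surfaces with $\chi<0$) it exists for all $t\geq 0$ and converges smoothly to the unique constant-curvature representative $\tilde g$ in the conformal class, the desired inequality will follow by comparing $\lambda/\kappa$ at $t=0$ with its limit as $t\to\infty$.

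I would begin by setting $r:=4\pi\chi/\vol_g(M)$ and running the flow $\dit g(t)=(r-R_{g(t)})\,g(t)$ with $g(0)=g$.  Under this flow $K=R/2$ satisfies the reaction--diffusion equation $\dit K = \Delta K + 2K(K-K_{\tilde g})$, where $K_{\tilde g}=r/2=2\pi\chi/\vol_g(M)$.  Gauss--Bonnet and volume conservation force the spatial average of $K$ to equal $K_{\tilde g}$ at all times, so $\kappa(t)\leq K_{\tilde g}<0$ throughout the flow.  Hamilton's maximum-principle argument applied at a point realizing the minimum then yields the Dini-derivative bound
\[
\frac{d\kappa}{dt}\;\geq\;2\kappa\,(\kappa-K_{\tilde g}),
\]
and since $\kappa<0$, dividing by $\kappa$ reverses the inequality to $\kappa'/\kappa\leq 2(\kappa-K_{\tilde g})$.

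For $\lambda_{g(t)}$ I would exploit the two-dimensional conformal invariance of the Dirichlet energy: writing $g(t)=e^{2u(t)}g_0$, the Rayleigh quotient
\[
\lambda(t)\;=\;\frac{\int_M|\nabla\varphi|^{2}_{g_0}\,d\mu_{g_0}}{\int_M \varphi^{2} e^{2u(t)}\,d\mu_{g_0}}
\]
has a $t$-independent numerator, so only the denominator contributes to $d\lambda/dt$.  Combining this with $2\dit u=r-R$ and taking $\varphi$ to be an $L^{2}(g(t))$-normalized eigenfunction (legitimate under the stated $C^1$-dependence hypothesis) I expect to obtain
\[
\frac{d\lambda}{dt}\;=\;2\lambda\int_M(K-K_{\tilde g})\,\varphi^{2}\,d\mu_{g(t)}\;\geq\;2\lambda\,(\kappa-K_{\tilde g}),
\]
i.e.\ $\lambda'/\lambda\geq 2(\kappa-K_{\tilde g})\geq\kappa'/\kappa$.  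Hence $(\lambda/\kappa)'\leq 0$, and letting $t\to\infty$ (with $\kappa(t)\to K_{\tilde g}$ and $\lambda(t)\to\lambda_{\tilde g}$ by convergence of the flow) completes the argument.

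The main technical obstacle is the non-smoothness of $\kappa(t)=\min_M K_{g(t)}$ in $t$, which forces one to argue via one-sided derivatives and Hamilton's maximum principle for evolving minima rather than a literal ODE computation.  A secondary delicacy is that the $i^{\textup{th}}$ eigenvalue need not be simple, so the first-variation formula for $\lambda$ must be justified through the $C^1$-dependence hypothesis (or by selecting a smooth branch of eigenfunctions) rather than by naïve non-degenerate perturbation theory.
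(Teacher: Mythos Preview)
Your proposal is correct and rests on the same ingredients as the paper: the normalized Ricci flow on a surface with $\chi<0$, Hamilton's long-time existence and convergence to the constant-curvature metric $\tilde g$, the variation formula $\lambda'=\lambda\int_M(R-r)u^{2}\,d\mu\big/\int_M u^{2}\,d\mu$, and the scalar-curvature maximum principle.  The packaging differs.  You follow the \emph{evolving} spatial minimum $\kappa(t)$ and show directly that $\lambda(t)/\kappa(t)$ is non-increasing via the chain $\lambda'/\lambda\geq 2(\kappa-K_{\tilde g})\geq\kappa'/\kappa$.  The paper instead fixes $\sigma=\kappa_g$ at $t=0$, compares $R$ to the explicit ODE solution $s(t)=r\big/\bigl(1-(1-\tfrac{r}{2\sigma})e^{rt}\bigr)$ of $s'=s(s-r)$, integrates $\int_0^t(s-r)\,d\tau$ in closed form, and lets $t\to\infty$ to obtain $\lambda(\infty)\geq\lambda(0)\,r/(2\sigma)$; dividing by $\kappa_g<0$ then gives the comparison.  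Your monotonicity formulation is conceptually cleaner and yields the slightly stronger statement that $\lambda(t)/\kappa(t)$ decreases for all $t$, at the price of the Dini-derivative technicality you correctly flag; the paper's use of a smooth ODE comparison function sidesteps that issue entirely while arriving at the same endpoint inequality.
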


\begin{theorem}[Sharp Upper Bound]\label{thm2}
Let $M$ be a compact surface with Euler Characteristic $\chi<0$,
$g$ any Riemannian metric on $M$, and $\tilde{g}$ a Riemannian
metric on $M$ that has constant Gauss curvature $K_{\tilde{g}}$ and that is in the
same conformal class as g.  Let $\sigma$ be a lower bound of the
Gauss curvature $K_g$. If\, $\vol_g(M)=\vol_{\tilde{g}}(M)$, and if
the $i^{\textup{th}}$
eigenvalue $\lam_g$ of the Lapacian $\Delta_g$ has $\textup{C}^1$-dependence on metric $g$,
then $\lam_g$ has an upper bound
\[
\lambda_g\leq\frac{\lambda_{\tilde{g}}}{\kappa_{\tilde{g}}}\,
\,\sigma,
\]
that is,
\[
\lambda_g\leq\frac{\lambda_{\tilde{g}}}{2\pi\chi}\,
\vol_{\tilde{g}}(M)\,\sigma,
\]
where the constant Gauss curvature for metric $\tilde{g}$ is
$K_{\tilde{g}}=2\pi\chi/\vol_g(M)$.

In particular, we have
\[
\lambda_g\leq\frac{\lambda_{\tilde{g}}}{\kappa_{\tilde{g}}}\,\kappa_g,
\]
that is
\[
\lambda_g\leq\frac{\lambda_{\tilde{g}}}{2\pi\chi}\,
\vol_{\tilde{g}}(M)\,\kappa_g,
\]
for which, the equality holds for metric $g$ with constant Gauss
curvature.
\end{theorem}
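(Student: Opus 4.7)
The plan is to derive Theorem \ref{thm2} as a direct consequence of Theorem \ref{thm1}, with all of the geometric content sitting in Theorem \ref{thm1}; what remains is essentially careful bookkeeping of signs, which ultimately flow from the hypothesis $\chi<0$.

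First I would pin down the signs. Gauss--Bonnet gives $\int_M K_g\,d\mu_g=2\pi\chi<0$, so the average of $K_g$ equals $2\pi\chi/\vol_g(M)=K_{\tilde g}=\kappa_{\tilde g}<0$ and consequently the minimum $\kappa_g$ is itself strictly negative (in fact $\kappa_g\leq\kappa_{\tilde g}$). Eigenvalues of $\Delta_g$ are non-negative, so the ratios $\lambda_g/\kappa_g$ and $\lambda_{\tilde g}/\kappa_{\tilde g}$ appearing in Theorem \ref{thm1} are well-defined and non-positive.

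Next I would apply Theorem \ref{thm1} and multiply through by $\kappa_g<0$, which reverses the inequality to give $\lambda_g\leq(\lambda_{\tilde g}/\kappa_{\tilde g})\,\kappa_g$. Since $\sigma$ is a lower bound for $K_g$ we have $\sigma\leq\kappa_g$; multiplying this inequality by the non-positive number $\lambda_{\tilde g}/\kappa_{\tilde g}$ reverses it again, yielding $(\lambda_{\tilde g}/\kappa_{\tilde g})\,\sigma\geq(\lambda_{\tilde g}/\kappa_{\tilde g})\,\kappa_g\geq\lambda_g$. This is the first stated inequality, and substituting $\kappa_{\tilde g}=2\pi\chi/\vol_g(M)=2\pi\chi/\vol_{\tilde g}(M)$ produces the rewritten form. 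Specializing $\sigma=\kappa_g$ gives the ``in particular'' statement.

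For the equality clause, suppose $g$ itself has constant Gauss curvature. By the uniformization theorem the constant-curvature representative of a conformal class is unique up to a constant rescaling, and the normalization $\vol_g(M)=\vol_{\tilde g}(M)$ pins down the scaling, forcing $g=\tilde g$; then $\lambda_g=\lambda_{\tilde g}$ and $\kappa_g=\kappa_{\tilde g}$, and the inequality becomes equality. I do not expect any genuine obstacle here beyond sign tracking, since all the analytic work has been done in Theorem \ref{thm1}; the only point demanding care is that $\kappa_g,\kappa_{\tilde g}<0$, so each multiplication by these quantities flips the inequality.
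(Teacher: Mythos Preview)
Your reduction of Theorem~\ref{thm2} to Theorem~\ref{thm1} is algebraically sound: the sign tracking is correct, and the equality clause via uniqueness (up to scaling, fixed by the volume normalization) of the constant-curvature metric in a conformal class is fine. The problem is structural. In this paper the logical dependence runs the \emph{other} way: Theorem~\ref{thm2} is proved first, by a direct Ricci-flow argument, and Theorem~\ref{thm1} is then read off from the key inequality established in that proof. So invoking Theorem~\ref{thm1} here is circular---the ``analytic work'' you say sits in Theorem~\ref{thm1} is exactly the content of the proof of Theorem~\ref{thm2}.

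For comparison, what the paper actually does: run the normalized Ricci flow $\partial_t g=(r-R)g$ with $g(0)=g$; for $\chi<0$ one has long-time existence and exponential convergence to a constant-curvature metric $g(\infty)=\tilde g$. Differentiating the eigenvalue equation and using $\partial_t\Delta=(R-r)\Delta$ gives
\[
\lambda'(t)=\lambda(t)\,\frac{\int_M(R-r)u^2\,d\mu}{\int_M u^2\,d\mu},
\]
so $\lambda(t)=\lambda(0)\exp\int_0^t[\cdots]\,d\tau$. The maximum principle applied to $\partial_t R=\Delta R+R(R-r)$ yields the pointwise lower bound $R\geq r\big/\bigl(1-(1-\tfrac{r}{2\sigma})e^{rt}\bigr)$; integrating in $t$ and letting $t\to\infty$ gives $\lambda_{\tilde g}\geq \lambda_g\cdot r/(2\sigma)$, which is the first inequality of the theorem. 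Your proposal contains none of this; to make it a genuine proof you would need an independent argument for Theorem~\ref{thm1}, and that argument is essentially the one just sketched.
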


\section{Proofs}\label{sec-proofs}
We prove Theorem \ref{thm2} first, and prove Theorem \ref{thm1}
after that.
\begin{proof}[Proof of Theorem \ref{thm2}]
We evolve metric $g$ in the theorem by the normalized Ricci flow.
Let $g(t)$ be the solution of the normalized Ricci flow
\begin{equation}\label{nrf}
\ppt g(t)=(r-R)g(t),
\end{equation}
with initial value
\begin{equation}\label{ic}
g(0)=g,
\end{equation}
where $R$ is the the scalar curvature of the metric $g(t)$, $r$
the average of the scalar curvature
\[
r=\int_M Rd\mu\Big/\int_Md\mu,
\]
and $d\mu=d\mu_t$ the volume element of $g(t)$. It is known from
\cite{hamilton}, \cite{hamilton2}, see also \cite{chow-knopf},
that $g(t)$ exists for all $t\geq 0$.

By (\ref{nrf}) we have
\[
\frac{d}{dt}(d\mu) =(r-R)d\mu
\]
and
\[
\frac{d}{dt}\,\vol_{g(t)}(M)=\frac{d}{dt}\int_M 1
d\mu=\int_M(r-R)d\mu=0.
\]
So the volume $\vol_{g(t)}(M)$ remains constant in $t$, that is
\[
V=:\vol_{g(t)}(M)=\vol_{g(0)}(M) \quad \forall t\geq 0.
\]
By the Gauss-Bonnet Theorem, we have
\begin{equation}\label{r-chi}
r=4\pi\chi/V<0.
\end{equation}
Therefore $r$ is a negative constant and the lower bounds of $R$
are negative as well. It is known from \cite{hamilton2}, see also
\cite{chow-knopf}, that $g(t)$ converges exponentially in any
$C^k$-norm to a smooth metric $g(\infty)$ that has constant Gauss
curvature $r/2$.

$R/2$ is the Gauss curvature $K$ of the metric $g(t)$. Let
$\sigma<0$ be a lower bound of $K|_{t=0}=R|_{t=0}/2$,
\begin{equation}\label{kappa}
K|_{t=0}\geq \sigma.
\end{equation}

Let $\lambda$ be the $i^{\textup{th}}$ eigenvalue of the Laplacian
$\Delta$ of metric $g(t)$, $u$ the corresponding eigenfunction. Then we have
\[
-\Del u =\lambda u.
\]
Take derivatives with respect to $t$,
\[
-(\ppt \Del) u -\Del \ppt u =(\ddt\lambda) u+ \lambda\ppt u.
\]
Multiply the equation by $u$ and integrate,
\begin{equation}\label{ddt-eigen-eq}
-\int u(\ppt \Del) ud\mu  -\int u\Del \ppt ud\mu = (\ddt\lambda)
\int u^2d\mu+ \lambda\int u\ppt ud\mu.
\end{equation}
Standard calculations (Ch. \cite{chow-knopf}) show that
\begin{equation}\label{ddt-laplacian}
\ppt(\Del_{g(t)})=(R-r)\Del_{g(t)}.
\end{equation}

Now (\ref{ddt-eigen-eq}), (\ref{ddt-laplacian}) and the equation
\[
-\int u\Del \ppt u=-\int\Del u\, \ppt u=\lambda\int u\ppt u,
\]
imply that
\[
\begin{split}
\frac{d}{dt}\lambda&=-\int u(\ppt \Del) ud\mu\Big/\int u^2d\mu\\
&=-\int u(R-r)\Del ud\mu\Big/\int u^2d\mu
\\
&=\lambda\int (R-r\big)u^2d\mu\Big/\int u^2d\mu.\\
\end{split}
\]
Therefore we have
\[
\lambda(t)=\lambda(0)\exp\Big\{\int_0^t\frac{\int_M (R-r)u^2d\mu}{\int_M u^2d\mu}
d\tau\Big\}.
\]
The scalar curvature $R$ evolves by the equation
\[
\ppt R=\Del R+ R(R-r).
\]
Using the maximum principle to compare $R$ with the solution
\[
s(t)=\frac{r}{1-\left(1-\frac{r}{2\sigma}\right)e^{rt}}
\]
of the initial value problem of ODE
\[
\left\{
\begin{split}
&\frac{d}{dt}s=s(s-r),\\
&s(0)=2\sigma,
\end{split}
\right.
\]
we get
\[
R\geq \frac{r}{1-\left(1-\frac{r}{2\sigma}\right)e^{rt}}\qquad
\forall x\in M, \forall t\in[0,\infty),
\]
and
\[
\begin{split}
&\int_0^t\frac{\int_M\Big(R(\tau)-r\Big)u^2d\mu}{\int_M u^2d\mu} d\tau \\
&\geq
\int_0^t\frac{\int_M\left(\frac{r}{1-\left(1-\frac{r}{2\sigma}\right)e^{r\tau}}-r\right)u^2d\mu}{\int_M u^2d\mu}
d\tau
\\
&=\ln\frac{\frac{r}{2\sigma}e^{rt}}{1-\left(1-\frac{r}{2\sigma}\right)e^{rt}}-rt.
\end{split}
\]
Therefore
\[
\lambda(t)\geq\lambda(0)\frac{\frac{r}{2\sigma}e^{rt}}{1-\left(1-\frac{r}{2\sigma}\right)e^{rt}}\Big/e^{rt}
=\lambda(0)\frac{\frac{r}{2\sigma}}{1-\left(1-\frac{r}{2\sigma}\right)e^{rt}}.
\]
Note that the eigenvalues of the Laplacian continuously depends on the metric. 
Letting $t\rightarrow \infty$,  we have
\begin{equation}\label{sharp-bound}
\lambda(\infty)\geq\lambda(0)\frac{r}{2\sigma},
\end{equation}
where $\lam(\infty)$ is the $i^{\textup{th}}$ eigenvalue of the
Laplacian of the metric $g(\infty)$ that has constant Gauss
curvature $r/2$.

Let $\tilde{g}=g(\infty)$. It is clear that
$\kappa_{\tilde{g}}=r/2$, since Gauss curvature $K_{\tilde{g}}$
for metric $\tilde{g}$ is constant. Taking these with (\ref{ic})
into (\ref{sharp-bound}), we get the first estimate in the
theorem, which with (\ref{r-chi}) yield the second estimate in the
theorem
\[
\lambda_{g}\leq\frac{\lambda_{\tilde{g}}}{2\pi\chi}\,\sigma
\vol_{\tilde{g}}(M).
\]

By the Gauss-Bonnet Theorem, the constant Gauss curvature for
metric $\tilde{g}$ is
$r/2=2\pi\chi/\vol_{\tilde{g}}(M)=2\pi\chi/\vol_g(M)$.

Letting $\sigma=\kappa_g$, we get the third and fourth estimates
in the theorem from the first two.
\end{proof}

\begin{proof}[Proof of Theorem \ref{thm1}]

Letting $g(\infty)=\tilde{g}$, $\sigma=\kappa_g$ and
$r/2=\kappa_{\tilde{g}}$ in (\ref{sharp-bound}), with (\ref{ic}),
we get
\[
\lambda_g\leq\lambda_{\tilde{g}}\frac{\kappa_g}{\kappa_{\tilde{g}}}.
\]
Therefore we have
\[
\frac{\lambda_g}{\kappa_g}\geq\frac{\lambda_{\tilde{g}}}{\kappa_{\tilde{g}}}
\]
The last estimate follows from the above one.
\end{proof}

\begin{center}
\textbf{Acknowledgement}
\end{center}
The author is grateful to Professor Yieh-Hei Wan for his constant
support, valuable suggestions and helps,  to Professor Bennett
Chow for his many enlightening talks, papers and books,
suggestions and helps.

\end{document}